\definecolor{darkblue}{rgb}{.8,.15,.15}
\definecolor{darkgreen}{rgb}{0.15,.4,.5}
\renewcommand*{\backref}[1]{}
\renewcommand*{\backrefalt}[4]{%
\ifcase #1 %
 [Not cited]%
\or
 [Cited on page #2]%
\else
 [Cited on pages #2]%
\fi
}
\newtheorem{theorem}{Theorem}[section]
\newtheorem{corollary}[theorem]{Corollary}
\newtheorem{lemma}[theorem]{Lemma}
\theoremstyle{definition}
\numberwithin{theorem}{section}
\numberwithin{figure}{section}
\numberwithin{equation}{section}
\numberwithin{table}{section}
\newcommand{\x}{\boldsymbol{x}}
\renewcommand{\v}{\boldsymbol{v}}
\newcommand{\R}{\mathbb{R}}
\newcommand{\TT}{\mathbb{T}}
\renewcommand{\P}{\mathbb{P}}
\title[Non-Generic Tropical Hyperplane Arrangements ]{\mbox{}\vspace{-3\baselineskip}
  \\
  Non-Generic Tropical Hyperplane Arrangements and The Secondary Polytope of $\Delta_{n-1} \times \Delta_{d-1}$}
\author[Piechnik]{Lindsay C. Piechnik}
\address{Lindsay C. Piechnik \\ Columbia University, NY \\ USA}
\email{piechnik@math.columbia.edu}
\date{\today}
\begin{document}
\begin{abstract}

Ardila and Develin's paper on tropical oriented hyperplane arrangements and tropical oriented matroids  defines tropical oriented matroids and conjectures a bijection between them and triangulations of products of simplices $\Delta_{n-1} \times \Delta_{d-1}$.  Oh and Yoo recently confirmed this conjecture; however, neither group addressed the case of hyperplanes that are not in generic position.  These non-generic arrangements do not correspond to tropical oriented matroids, but they encode information about subdivisions of $\Delta_{n-1} \times \Delta_{d-1}$. This note considers the non-generic case and presents some preliminary results in the area.  

\vspace{-3\baselineskip}
\end{abstract}
\maketitle

{\tiny
\tableofcontents}


\section{Tropical Hyperplanes Background}

 Working with tropical hyperplane arrangements requires some familiarity with tropical geometry.   
   Tropical geometry is done over the \emph{tropical semiring}.  
   The \emph{tropical semiring} is the ring $R = \{ \R, \otimes, \oplus \}$, where $\otimes$ is standard addition and $\oplus$ is taking maximums.  
    In the study of tropical hyperplane arrangements, one benefits from considering \emph{tropical projective (d-1) space}  $\TT\P^{d-1}$. 
        This is obtained from Tropical $\R^d$ by modding out by tropical scalar multiplication.  
        (For further background see \cite{MR2149011}.)

     A tropical hyperplane is given by the vanishing of a linear functional $\sum{c_ix_i}$.
    Unlike traditional hyperplanes whose half-spaces can be labeled by sign $(0, -, +)$, a tropical hyperplane divides  $\TT\P^{d-1}$ into $d$ full dimensional sectors.  
    This creates a fan polar to the standard simplex $\Delta_{d-1}$.
      There is a natural labeling of both the full dimensional sectors and the lower dimensional intersections of the fan that is the tropical hyperplane. 
        The cones of the fan are indexed by the subset of $[d]$ corresponding to the $c_i + x_i$ that it maximizes. 
         For  full dimensional sectors this is a single element, as the maximum is only achieved once.
           Each lower dimensional cone is indexed by  basis  vectors of the faces of the simplex to which it is polar. 
            For the apex this means the entire set $[d]$.           
            Positions within a tropical hyperplane arrangement are given by ordered tuples of these $[d]$-subsets.

  Points in a specific tropical hyperplane arrangement can be described by their projective coordinates.  However, when interested in the purely combinatorial properties of such an arrangement, only the relative positions of the hyperplanes matter.  So, points are typically described by their types.  
  The \emph{type} of a point $\x$ $\in$ $\TT\P^{d-1}$ with respect to a tropical hyperplane arrangement $H_1, ... , H_n$ $\in$ $\TT\P^{d-1}$ is the $n$-tuple $(A_1, ... A_n)$, where each $A_i$ is the subset of $[d]$ corresponding to the closed  sectors of the hyperplane $H_i$ in which $\x$ is contained.  With respect to algebraic coordinates, for the hyperplane $H_i$ with vertex $\v_i$ $=$ $(v_{i1}, ... v_{id})$,  $A_i$ indicates which among the $x_j - v_{ij}$ are maximized.
    It is clear that every point on a face of a given arrangement has the same type.   So these types do not distinguish points within a face, but they do encode all relative information about the faces of the arrangements.      And the collection of types of an arrangement determine the arrangement up to combinatorial equivalence \cite{MR2511751}.


\section{Tropical Oriented Matroid Axioms}

Ardila and Develin's definition of a tropical oriented matroid was inspired by both  tropical oriented hyperplanes and traditional matroid theory \cite{MR2511751}.  As such they aimed to establish many parallel properties to the relationship between traditional oriented hyperplanes and oriented matroid theory. 

Tropical oriented matroids are defined via axioms on types. 
 
 A \emph{tropical oriented matroid} $M$ with parameters $(n, d)$ is defined as a collection of $(n, d)$ types which satisfy the Boundary, Elimination, Comparablity, and Surrounding Axioms.  
 
 \emph{Boundary Axiom:}  for each $j$ in $[d]$,  $\j ;= (j, j, ... , j)$ is a type in $M$.
 
 \emph{Elimination Axiom:} For any two types $A$ and $B$ in $M$ and a position $j \in [n]$, there is a type $C$ in $M$ such that $C_j = A_j \cup B_j$ , and $C_k \in \{ A_k, B_k, A_k \cup B_k \}$ for each $k \in [n]$.
 
 \emph{Comparability Axiom:} For any two types $A$ and $B$ in $M$ the comparability graph $CG_{A,B}$ is acyclic.
 
 \emph{Surrounding Axiom:} If $A$ is a type of $M$ then every refinement of $A$ is also a type of $M$.
 
 Ardila and Develin's paper \cite{MR2511751} gives complete explanations of these axioms, background definitions and interpretations.
 But the spirit of these axioms is natural even if their wording isn't.   
     For example, the surrounding axiom, which is most pertinent to this paper, says that given a type $A$, for which some entries are not singletons, a refinement of $A$ can be obtained by moving infinitesimally away from the face of the fan which $A$ represents. 
      In particular if both $j$ and $k$ appear in the $i^{th}$ coordinate of $A$'s type then a refinement of $A$ can be obtained by moving infinitesimally in the $j$ direction, or the $k$ direction, thus breaking the tie, $x_j - v_{ij} = x_k - v_{ik}$.  
     
      The failure of this axiom is what prevents the non-generic tropical hyperplane arrangements from being tropical oriented matroids.


\section{Tropical Hyperplane Arrangements and Tropical Matroids}

Traditional hyperplane arrangements in $d$ space intersect either   in a $d-2$-dimensional linear space (as the planes themselves are $d-1$- dimensional linear spaces of $d$- space), or they are parallel.  
This is not the case in tropical arrangements. 
 All tropical arrangements intersect.  
 However, they do not all intersect the same way. 
  Generally the intersection of two hyperplanes in $\TT \P^{d-1}$ is a $d-3$ dimensional cone. 
    However, this is not true when an apex of one of the tropical hyperplanes, say $H_i$, occurs on proper face of the fan determined by one of the other hyperplanes, say $H_j$, in the arrangement. 
     The result is that some of the cones in the fan determined by $H_i$ will be proper subsets of $H_j$'s. 
 Arrangements in which this occurs are called \emph{non-generic}.
   Arrangements that aren't non-generic are called \emph{generic}.
   The behavior of non-generic topical hyperplane arrangements differs from that of generic hyperplane arrangements.  
A \emph{non-generic} apex $A$ of a tropical hyperplane arrangement is an apex of a tropical hyperplane arrangement which is located on a proper face of the fan given by one of the other hyperplanes of the arrangement.  Figure \ref{fig:visual} shows two  arrangements of three tropical hyperplanes in $\TT \P^{2}$, one generic and one non-generic.

     \begin{figure}[htb]
  \centering
\includegraphics[width=30mm]{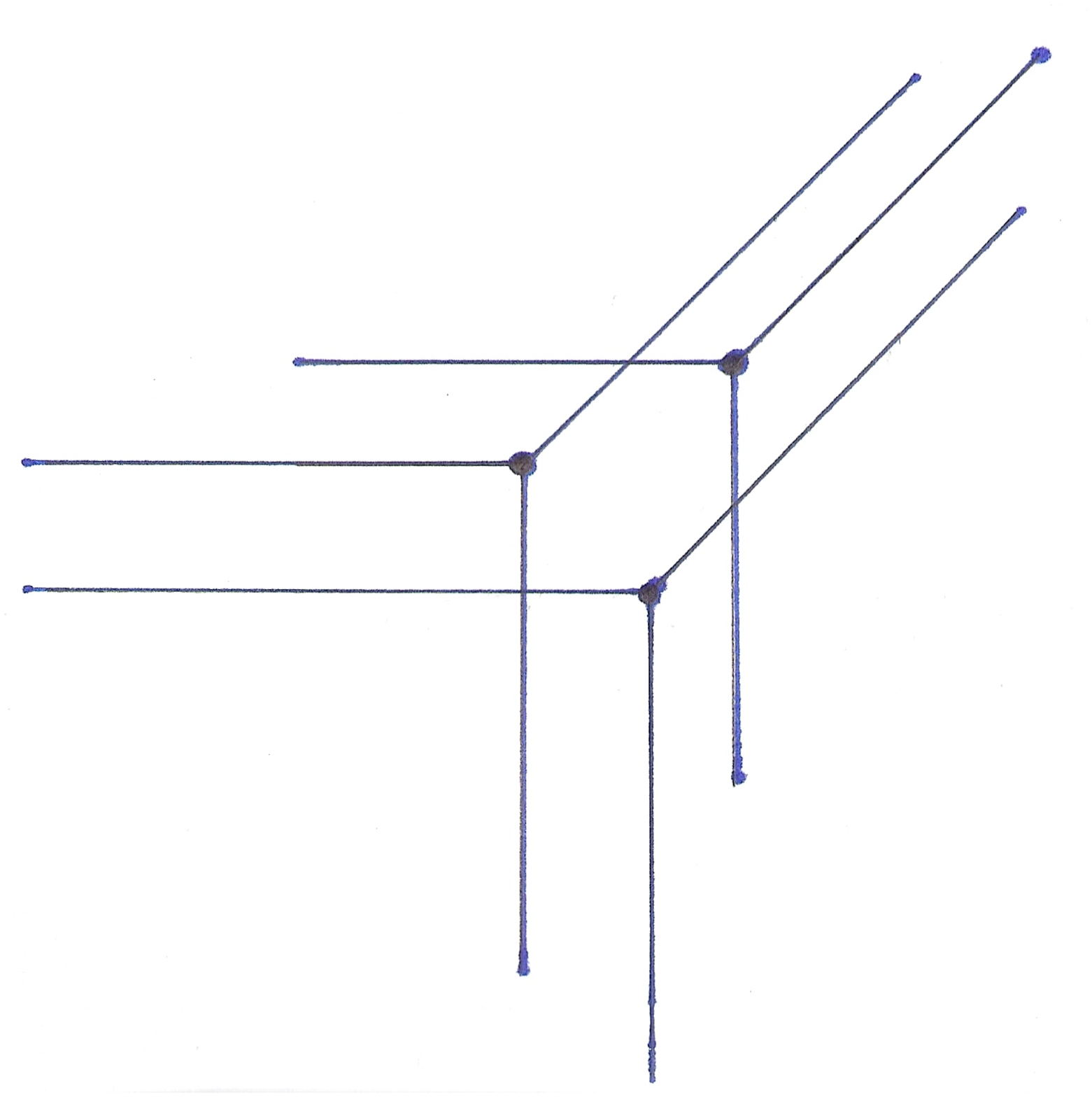}
\includegraphics[width=30mm]{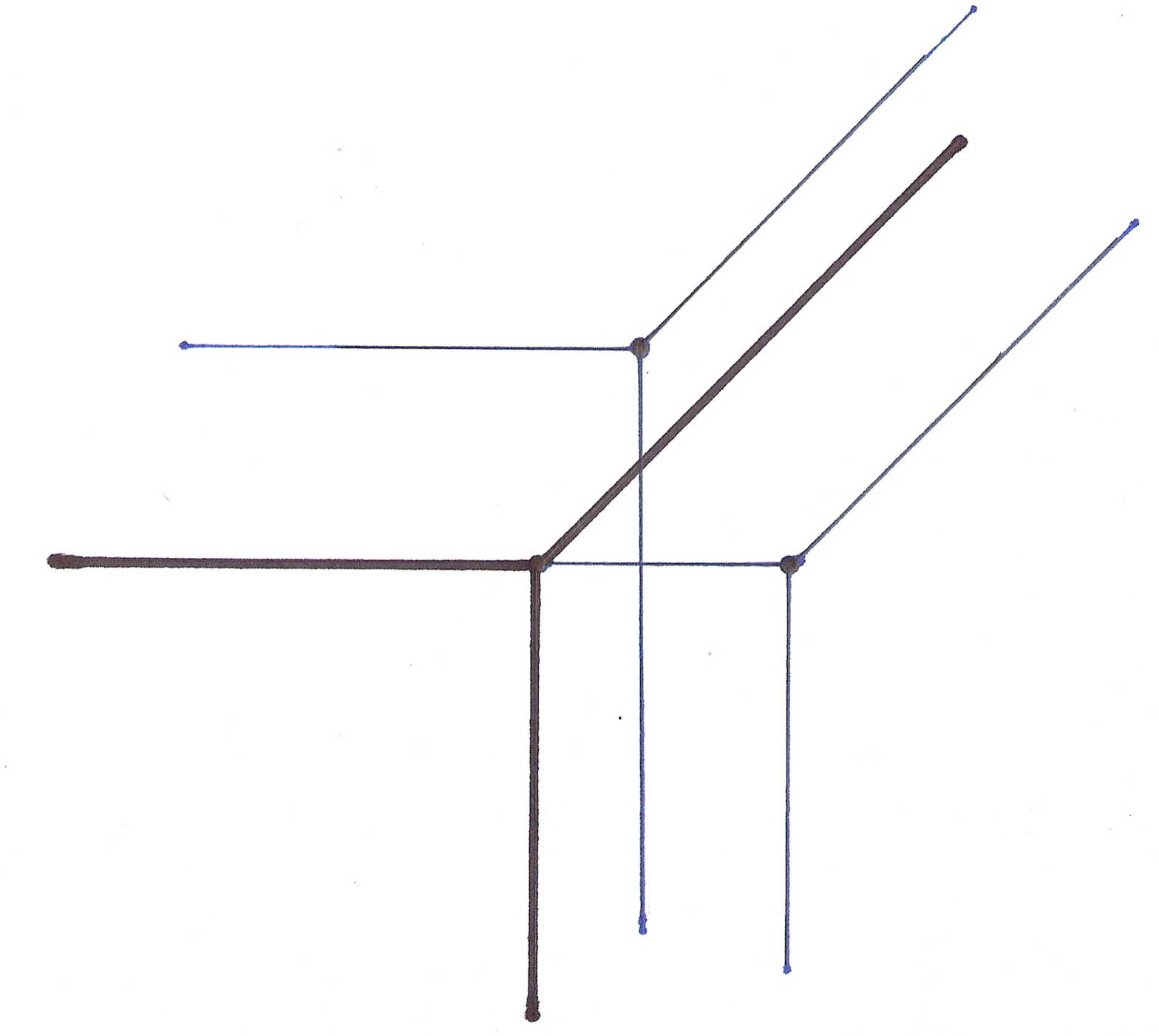}
  \caption{
  Here are two arrangements of three tropical hyperplanes in $\TT \P^{2}$.  The one on the left is generic, the one on the right is non-generic.  (The non-generic hyperplane is bold.)
  }
  \label{fig:visual}
\end{figure}

\begin{lemma} \label{lem:nonGenVert}

   For a non-generic apex $A$, in an arrangement of $n$ tropical hyperplanes in $\TT \P^{d-1}$ , 
 the total number of elements summed over its $A_i$'s will be strictly greater than  $n + d-1$.

\end{lemma}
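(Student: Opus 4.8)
The plan is to unwind the definition of a non-generic apex and then bound the type-entries $|A_i|$ termwise. First I would fix notation: since $A$ is an apex of the arrangement, it is the vertex $\v_i$ of some hyperplane $H_i$, and by the labelling conventions recalled in the background section the apex of a tropical hyperplane lies in the cone of its fan indexed by all of $[d]$. Hence the $i$-th coordinate of the type of $A$ satisfies $A_i = [d]$, i.e.\ $|A_i| = d$.

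Next I would invoke the non-genericity hypothesis. By definition $A = \v_i$ lies on a \emph{proper} face of the fan determined by some other hyperplane $H_j$ with $j \neq i$. The cones of that fan are indexed by subsets of $[d]$, with the full-dimensional sectors corresponding exactly to singletons and every proper (lower-dimensional) face corresponding to a subset of size at least two. Since $A$ sits on such a proper face, $A_j$ — which records the closed sectors of $H_j$ containing $A$ — must be a subset of size $\ge 2$, so $|A_j| \ge 2$.

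Finally, for every remaining index $k \in [n] \setminus \{i,j\}$ the point $A$ lies in at least one closed sector of $H_k$, so $|A_k| \ge 1$. Summing these bounds gives $\sum_{k=1}^n |A_k| \ge d + 2 + (n-2) = n + d > n + d - 1$, as claimed. (The same count with $|A_j| \ge 1$ in place of $\ge 2$ reproduces the generic value $n + d - 1$, so the lemma really measures the single extra incidence forced by non-genericity.)

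I do not expect a serious obstacle: the argument is essentially the translation of "lies on a proper face of the fan" into "the corresponding type-entry is not a singleton", using the indexing of the fan's cones described earlier. The only point needing care is to ensure that the witnessing hyperplane $H_j$ is genuinely distinct from $H_i$ — which is built into the definition of a non-generic apex — so that the contributions $|A_i| = d$ and $|A_j| \ge 2$ are recorded on separate coordinates and do not overlap.
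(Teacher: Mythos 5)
Your proposal is correct and follows essentially the same argument as the paper: the apex coordinate contributes $d$, the proper-face incidence with a second hyperplane forces a type-entry of size at least $2$, and the remaining $n-2$ nonempty entries contribute at least $1$ each, giving a total of at least $n+d$.
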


\begin{proof}

  Let $M$ be an arrangement of $n$ tropical hyperplanes in $\TT \P^{d-1}$  with a non-generic hyperplane.  Without loss of generality assume $H_1$ is this non-generic hyperplane.  By  definition, its non-generic apex $A$ occurs on a proper subface of the polyhedral fan given by one of the other hyperplanes, say $H_i$ of $M$.    Which means the coordinates (actual coordinates, not type) of $A$ must satisfy equality on at least two of the 
$\sum{c_jx_j} = max \{ c_1 + x_1, ... , c_d + x_d\}$ defining $H_i$, say this includes $j$ and $k$, then $A_i$ contains at least $j$ and $k$.  Since $A$ is $H_1$'s apex, we know  $A_1 = [d]$.  That gives us at least $d + 2$ entries from 2 positions in $A$'s type.  We know there are $n - 2$ positions remaining and that none of them are empty.  So $A$'s type contains at least $n + d$ elements.

\end{proof}

\begin{corollary} \label{cor:NonGenericHyps}

   The types of a non-generic tropical hyperplane arrangement do not form a tropical oriented matroid.
 
\end{corollary}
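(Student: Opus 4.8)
The goal is to show that the collection $M$ of types realised by a non-generic arrangement violates one of the four axioms, hence is not a tropical oriented matroid. The argument is driven entirely by Lemma~\ref{lem:nonGenVert}: relabelling so that $H_1$ is the non-generic hyperplane, its apex $A$ is a type of $M$ with $A_1=[d]$, with $|A_i|\ge 2$ for the hyperplane $H_i$ witnessing the non-genericity, and with $\sum_\ell |A_\ell|\ge n+d$. Equivalently the ``rank deficiency'' $\sum_\ell(|A_\ell|-1)$ of this type is at least $d$, whereas in any tropical oriented matroid this quantity is at most $d-1$ (geometrically, the codimension of a face of the arrangement cannot exceed $\dim \TT\P^{d-1}=d-1$; combinatorially, every maximal chain of proper refinements issuing from a type has length at most $d-1$). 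So the plan is: (i) take the oversized type $A$ produced by the Lemma; (ii) invoke, or re-derive from the Surrounding Axiom, the bound $\sum_\ell(|A_\ell|-1)\le d-1$ valid in any tropical oriented matroid; (iii) conclude that $M$ is not one because $A$ violates this bound.

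For step (ii) I would proceed exactly as the discussion of the Surrounding Axiom above suggests. Applying the axiom to $A$ repeatedly, each time replacing a non-singleton entry by a proper subset via a tie-breaking direction, produces a strictly descending sequence of types $A=A^{(0)}, A^{(1)},\dots, A^{(r)}$ terminating at a fine type (all entries singletons), with $r=\sum_\ell(|A_\ell|-1)\ge d$. The tension is that this descending sequence encodes $d$ independent tie-breaking directions while the arrangement sits in the $(d-1)$-dimensional space $\TT\P^{d-1}$; the non-genericity at $A$ is precisely what allows $\sum_\ell(|A_\ell|-1)$ to exceed $d-1$, since for two indices $j,k\in A_i$ the equality $v_{1j}-v_{ij}=v_{1k}-v_{ik}$ forced at $A$ means the wall of $H_1$ cutting out $x_j-x_k=v_{1j}-v_{1k}$ and the corresponding wall of $H_i$ are, near $A$, the same hyperplane. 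Following the chain, one reaches an intermediate type together with an ordered set partition of $[d]$ whose induced refinement is combinatorially legitimate but is not realised in the arrangement — any movement off that type consistent with the partition forces the first and $i$th coordinates of the resulting type to agree on the $\{j,k\}$-tie in a way the refinement does not — so the Surrounding Axiom fails for $M$.

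I expect step (ii) to be the main obstacle. One must either cite the bound $\sum_\ell(|A_\ell|-1)\le d-1$ from Ardila--Develin's discussion \cite{MR2511751} of the dimension of a tropical oriented matroid, or give a self-contained derivation of it from the axioms; in the latter case some care is needed to ensure that the offending refinement is genuinely absent from the arrangement, not merely from a neighbourhood of $A$. Once the bound is available, the Corollary follows from Lemma~\ref{lem:nonGenVert} in a single line.
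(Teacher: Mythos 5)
Your overall strategy --- use the oversized apex type supplied by Lemma~\ref{lem:nonGenVert} to contradict the Surrounding Axiom --- matches the paper's, but your preferred route for step (ii) has a genuine gap. The bound $\sum_\ell(|A_\ell|-1)\le d-1$ for an \emph{arbitrary} tropical oriented matroid is not available to cite from Ardila--Develin: their dimension of a type is computed from the connected components of a graph on $[d]$ attached to $A$, and that invariant stays in range even when $\sum_\ell(|A_\ell|-1)\ge d$ (two coordinates $A_i$, $A_{i'}$ both containing $\{j,k\}$ contribute one adjacency to the graph but two to your count). In fact $\sum_\ell|A_\ell|\le n+d-1$ says exactly that the bipartite graph $\{(i,j):j\in A_i\}$ is a forest in $K_{n,d}$, i.e.\ that every cell of the associated subdivision of $\Delta_{n-1}\times\Delta_{d-1}$ is a simplex --- which is essentially the hard direction of Theorem~\ref{them:Matroid/Simplex}. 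So Route (ii) either silently imports the full bijection theorem (in which case the corollary is immediate and neither the bound nor the Lemma is needed) or is unproven; and your dimension-count heuristic ($d$ independent tie-breaking directions inside the $(d-1)$-dimensional $\TT\P^{d-1}$) only speaks about realizable arrangements, whereas a non-membership proof needs a constraint valid for every abstract matroid.

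Your fallback derivation collapses to the paper's direct argument, and there you need to be more careful than your sketch about what the Surrounding Axiom actually demands. A refinement is induced by one ordered partition of $[d]$ applied to \emph{every} coordinate simultaneously; you cannot in general ``replace a non-singleton entry by a proper subset'' one coordinate at a time, because any nontrivial ordered partition also refines $A_1=[d]$ down to its first block. So you must exhibit a specific ordered partition whose induced refinement of $A$ is required by the axiom yet realized by no point of the arrangement, and the absence must be checked globally rather than in a neighbourhood of $A$ (for that, the useful observation is that the apex of $H_1$ is the unique point of $\TT\P^{d-1}$ whose first coordinate is $[d]$). As written, neither of your two routes closes the argument.
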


\begin{proof}

 Let $M$ be a non-generic tropical hyperplane arrangement.  By definition $M$ has a non-generic apex $A$.  Consider this $A$, with projective coordinates $\{v_{11}, ... v_{1d}\}$.  Without loss of generality assume $A$ is the apex of $H_1$.  By lemma \ref{lem:nonGenVert}, the sum of elements over the sets $A_i$ is at least $n + d $.  Without loss of generality assume it has $n + d $ elements.   Since $A$ is the apex of $H_1$ we know $A_1 = [d]$ so there are $n-1$ sets for the remaining $n$ elements.  None of these sets can be empty.  So there must be one with $2$ elements, say $A_i = \{ j, k \}$.  
 So, $x_j - v_{ij} = x_k - v_{ik}$, and $x_1 - v_{11} = x_2 - v_{12}  = .....  =  x_d - v_{1d}$.
 Now we can move infinitesimally off of this vertex in the direction to break the tie $x_j - v_{ij} = x_k - v_{ik}$, and obtain a point $A'$ which agrees on all coordinates of $A$ except $A_i$, where we will have  $A'_i = \{k\}$.  But this will also break the tie $x_j - v_{1j} = x_k - v_{1k}$.  So we will have $A'_1 = \{1, 2, ... j-1, j+1, ... d\} \ne A_1$.  So there is no point in the arrangement whose type agrees with $A$ on all coordinates except $A_i$ and has $i^{th}$ coordinate $\{k\}$.  This fails to satisfy the surrounding axiom for tropical oriented matroids.  So $M$ is not a tropical oriented matroid.

\end{proof}

In light of the previous corollary, Ardila and Develin's statement of the relationship between tropical oriented matroids and tropical hyperplane arrangements should be restated as follows:

\begin{theorem} \label{Thm:GenVert}

  The collection of types in a generic tropical hyperplane arrangement forms a tropical oriented matroid.

\end{theorem}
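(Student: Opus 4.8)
Fix a generic arrangement $M$ of tropical hyperplanes $H_1,\dots,H_n$ in $\TT\P^{d-1}$, where $H_i$ has apex $\v_i=(v_{i1},\dots,v_{id})$, and write $\mathcal{T}(M)$ for its collection of types; set $\ell_{ij}(\z)=z_j-v_{ij}$, so that for $\x\in\TT\P^{d-1}$ the $i$th coordinate of $\mathrm{type}(\x)$ is $\{\,j:\ell_{ij}(\x)=\max_k\ell_{ik}(\x)\,\}$. The plan is to check the four axioms, following Ardila and Develin \cite{MR2511751} for Boundary, Comparability and Elimination — where nothing about the \emph{positions} of the apices is used — and to concentrate the real work on the Surrounding axiom, which Corollary \ref{cor:NonGenericHyps} identifies as exactly the axiom for which genericity is indispensable. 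For \emph{Boundary}, given $j\in[d]$ one takes a representative $\x$ whose $j$th coordinate exceeds the others by more than $\max_i\max_k(v_{ij}-v_{ik})$, so that $\ell_{ij}(\x)$ is the unique maximum for every $i$ and $\mathrm{type}(\x)=(j,\dots,j)$. For \emph{Comparability}, a directed edge of the comparability graph $CG_{A,B}$ of $A=\mathrm{type}(\x)$ and $B=\mathrm{type}(\y)$ records a linear inequality among the coordinates of $\x$ and $\y$; summing around a directed cycle would force a strict inequality $0<0$, so $CG_{A,B}$ is acyclic, and this argument is indifferent to genericity. For \emph{Elimination}, one reproduces the geometric argument of \cite{MR2511751}: starting from $\x$ one moves along a suitable path that drives the $k$th type-coordinate up to $A_k\cup B_k$ while no other coordinate ever leaves $\{A_\ell,B_\ell,A_\ell\cup B_\ell\}$, and the terminal point has a type $C$ of the required form.

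It then remains to verify the \emph{Surrounding axiom}: every combinatorial refinement $A'$ of a type $A=\mathrm{type}(\x)\in\mathcal{T}(M)$ is again a type of $M$. I would first set this up locally. Taking $\x$ in the relative interior of a face $F$ of the arrangement, the combinatorial refinements of $A$ correspond to the faces lying in the star of $F$, so what must be shown is that the star of $F$ realizes \emph{all} of them. The local fan of the whole arrangement at $\x$ is the common refinement of the $n$ local fans of the $H_i$ at $\x$, and each of these is a translate of the normal fan of the face $\Delta_{A_i-1}$ of $\Delta_{d-1}$: a small perturbation $\x+\varepsilon\vu$ changes the $i$th type-coordinate to $\{\,j\in A_i:u_j=\max_{j'\in A_i}u_{j'}\,\}$ while the indices outside $A_i$ stay strictly below. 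So the claim is that the cone of directions $\vu$ that refine $A$, modulo its lineality, is the ``largest possible'' common refinement compatible with $A$, namely the one whose cells are labelled precisely by the combinatorial refinements of $A$.

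The point at which genericity enters is exactly the negation of the mechanism exploited in Corollary \ref{cor:NonGenericHyps}: there the axiom failed because breaking a tie $\ell_{ij}(\x)=\ell_{ik}(\x)$ was forced to break a tie $\ell_{1j}(\x)=\ell_{1k}(\x)$ as well, owing to the coincidence $v_{1j}-v_{1k}=v_{ij}-v_{ik}$ produced by $\v_1$ sitting on the fan of $H_i$. By Lemma \ref{lem:nonGenVert} and the definition of genericity, $M$ admits no such coincidence; hence the equalities coming from different type-coordinates impose independent linear conditions at $\x$, and the ties in distinct coordinates can be broken independently. This is precisely what lets the direction cone split, so that every tuple of cone-choices — i.e.\ every combinatorial refinement of $A$ — is realized by an actual face of $M$ in the star of $F$, and the Surrounding axiom holds.

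I expect the main obstacle to be making rigorous that genericity forces this independence/splitting — equivalently, that the linear system imposing the cross-coordinate ties has full rank at $\x$, so that no combinatorial refinement of a genuine type of $M$ is missing. A convenient route, which I would pursue if the direct argument becomes unwieldy, is induction on $n$: the base case $n=1$ is the statement that the normal fan of $\Delta_{d-1}$ is a fan (every refinement of one of its cones is again a cone), which is immediate and is the canonical tropical oriented matroid; for the inductive step one adds $H_n$ to a generic arrangement $H_1,\dots,H_{n-1}$ and uses that genericity makes the fan of $H_n$ meet the existing polyhedral subdivision transversally, so the new types are obtained from the old ones without destroying any axiom. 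Everything outside the Surrounding verification is either immediate or imported verbatim from \cite{MR2511751}.
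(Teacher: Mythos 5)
Your overall strategy coincides with the paper's: the paper's ``proof'' of Theorem \ref{Thm:GenVert} consists entirely of the remark that one should take Ardila and Develin's proof of their Theorem 3.6 verbatim and insert a genericity hypothesis wherever the Surrounding axiom is invoked, and you likewise import Boundary, Comparability and Elimination unchanged and isolate Surrounding as the step where genericity is supposed to act. On the level of architecture there is nothing to object to.

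The genuine gap is in the one place where you do new work, the Surrounding verification, and it is a gap you half-acknowledge yourself. In Ardila--Develin a refinement of $A$ is indexed by an ordered partition $P=(P_1,\dots,P_r)$ of $[d]$: the $i$th coordinate of $A_P$ is $A_i\cap P_{m(i)}$, where $P_{m(i)}$ is the first part meeting $A_i$. Your own local formula, $\mathrm{type}(\x+\varepsilon\vu)_i=\{\,j\in A_i:u_j=\max_{j'\in A_i}u_{j'}\,\}$, says precisely that perturbing in direction $\vu$ produces $A_P$ for the ordered partition $P$ given by the decreasing level sets of $\vu$; since every ordered partition of $[d]$ arises from some $\vu$, every refinement of $A$ is realized, and the ``full rank / independent tie-breaking'' condition that you flag as the main obstacle never has to be established --- the tie-breaks in the different coordinates are all driven by the same vector $\vu$ and are automatically consistent. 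So the step you defer is not the missing ingredient. Conversely, the independence you want genericity to buy would realize types such as the $A'$ of Corollary \ref{cor:NonGenericHyps} (change $A_i$ from $\{j,k\}$ to $\{k\}$ while keeping $A_1=[d]$ fixed), but no ordered partition produces that $A'$ from $A$, so it is not a refinement in the sense of the axiom and Surrounding does not demand it. Before finalizing, you therefore need to pin down exactly which axiom consumes the genericity hypothesis: as written, your Surrounding argument closes for arbitrary arrangements, which either means the hypothesis is needed elsewhere or that you are implicitly using the nonstandard notion of refinement underlying Corollary \ref{cor:NonGenericHyps}. Until that is resolved, the proof plan does not yet establish the theorem as a statement that is specifically about \emph{generic} arrangements.
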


  Theorem 3.6 of
   \cite{MR2511751}
  says this without explicitly stating the generic assumption.  The proof of Theorem \ref{Thm:GenVert} is precisely their proof with an added generic qualification for assertions about the surrounding axiom.


\section{Tropical Hyperplane Arrangements Meet $\Delta_{n-1} \times \Delta_{d-1}$}

    Duality is a useful property for any matroid theory to have, but it is not  so obvious in the tropical oriented case.  In fact, it appeared as a conjecture in  Ardila and Develin's work. 
    However, it is true and a direct corollary of the affirmative answer to their primary conjecture, which was recently confirmed by Oh and Yoo.
    
    \begin{theorem}
    \label{them:Matroid/Simplex}
    
    A collection of $(n, d)$ -types is a tropical oriented matroid if and only if it corresponds to a triangulation of $\Delta_{n-1} \times \Delta_{d-1}$.
    
    \end{theorem}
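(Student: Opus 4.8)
\section*{Proof proposal for Theorem \ref{them:Matroid/Simplex}}

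The plan is to fix an explicit combinatorial dictionary between the two sides and then establish the two implications separately: the forward direction (triangulation gives tropical oriented matroid) by checking the four axioms, and the converse (tropical oriented matroid gives triangulation) by an induction on $n$. The dictionary is as follows. Index the vertices of $\Delta_{n-1}\times\Delta_{d-1}$ by $[n]\times[d]$, equivalently by the edges of the complete bipartite graph $K_{n,d}$. A set of vertices is affinely independent — hence spans a face of any triangulation — exactly when the corresponding bipartite graph is a forest, and the maximal simplices, of dimension $n+d-2$, correspond to spanning trees of $K_{n,d}$, which have $n+d-1$ edges. To a type $A=(A_1,\dots,A_n)$ assign the bipartite graph $G_A$ with edge set $\{(i,j):j\in A_i\}$; the requirement that no $A_i$ be empty says every vertex on the $[n]$-side is non-isolated. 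The \emph{fine} types, those with $\sum_i|A_i|=n+d-1$, are the ones whose graphs are spanning trees, and these are the types that will index the maximal cells. Here one uses the basic fact — the positive counterpart of Corollary \ref{cor:NonGenericHyps} — that in a tropical oriented matroid every type satisfies $\sum_i|A_i|\le n+d-1$ with $G_A$ a forest, equality occurring precisely for fine types.

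For the implication \emph{triangulation $\Rightarrow$ tropical oriented matroid}, given a triangulation $T$ I would let $M_T$ be the collection of all types $A$ such that $G_A$ is contained in a maximal cell of $T$, together with all refinements thereof. The Boundary Axiom holds because the graph of $(j,j,\dots,j)$ is the star centered at $j\in[d]$, a forest, hence inside some cell. The Comparability Axiom holds because the cells of $T$ — and the bipartite graphs obtained by overlaying two of them along a shared face — are forests, which is exactly the acyclicity of the comparability graph. The Surrounding Axiom is built into the definition of $M_T$, since passing to a refinement corresponds to passing to a face of the carrying cell and $T$ is closed under faces. The real content is the Elimination Axiom: to eliminate two types $A$ and $B$ at a coordinate $j$ one walks through $T$ from a cell containing $G_A$ to one containing $G_B$, tracking how the $j$-th coordinate evolves, and the point is that this walk never leaves the triangulated polytope. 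When $T$ is regular this recovers, through Develin--Sturmfels \cite{MR2149011} and Theorem \ref{Thm:GenVert}, that a generic tropical hyperplane arrangement is a tropical oriented matroid; the task is to make the argument purely combinatorial so it survives for non-regular $T$.

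For the converse, \emph{tropical oriented matroid $\Rightarrow$ triangulation}, let $M$ be a tropical oriented matroid on $(n,d)$ and let $V(M)$ be its set of fine types, so that each $A\in V(M)$ determines a full-dimensional simplex $\sigma_A\subseteq\Delta_{n-1}\times\Delta_{d-1}$. I would prove that $\{\sigma_A:A\in V(M)\}$ is a triangulation. That the $\sigma_A$ meet pairwise along common faces follows from the Comparability and Elimination Axioms, which together govern the graph obtained by overlaying two spanning trees. The essential difficulty is to show the $\sigma_A$ \emph{cover} $\Delta_{n-1}\times\Delta_{d-1}$ with no gaps. For this I would induct on $n$: deleting the $n$-th hyperplane, i.e.\ restricting every type of $M$ to its first $n-1$ coordinates, produces a tropical oriented matroid on $(n-1,d)$, which by the inductive hypothesis is a triangulation $T'$ of $\Delta_{n-2}\times\Delta_{d-1}$; one must then show that the $n$-th coordinates of the fine types of $M$ subdivide each cell of $T'$ consistently and that the resulting pieces glue to a triangulation of $\Delta_{n-1}\times\Delta_{d-1}$. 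It may be cleaner to run the induction through the Cayley trick, under which triangulations of $\Delta_{n-1}\times\Delta_{d-1}$ correspond to fine mixed subdivisions of the dilated simplex $n\,\Delta_{d-1}=\Delta_{d-1}+\dots+\Delta_{d-1}$ and the deletion corresponds to dropping one Minkowski summand.

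I expect the covering step in this last direction to be the main obstacle. Pairwise compatibility of the $\sigma_A$ is local and follows from the axioms, but ruling out holes is global: one needs, for instance, a volume or Euler-characteristic computation showing the $\sigma_A$ exhaust $\Delta_{n-1}\times\Delta_{d-1}$, together with a verification that the inductive pieces sitting over the cells of $T'$ are themselves genuine triangulations and are glued face-to-face. By contrast the axiom checks in the other direction are mostly bookkeeping once the bipartite-graph dictionary is installed, with the Elimination Axiom the only one requiring genuine care.
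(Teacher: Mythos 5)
The paper does not actually prove Theorem \ref{them:Matroid/Simplex}: it is quoted as the main result of Oh and Yoo \cite{HoYoo}, who confirmed the Ardila--Develin conjecture, and the text only records that their proof is phrased via the dictionary between $\Delta_{n-1}\times\Delta_{d-1}$ and the complete bipartite graph $K_{n,d}$. Your dictionary (vertices as edges of $K_{n,d}$, affine independence as acyclicity, maximal cells as spanning trees, fine types with $\sum_i|A_i|=n+d-1$ indexing the maximal simplices) is the right setup and is consistent with the framework the cited proof uses. But as a proof the proposal has genuine gaps, and you have in fact flagged both of them yourself without closing either.

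First, in the direction ``triangulation $\Rightarrow$ tropical oriented matroid,'' the Elimination Axiom is reduced to ``one walks through $T$ from a cell containing $G_A$ to one containing $G_B$, tracking how the $j$-th coordinate evolves.'' For a regular $T$ this can be made precise by realizing $T$ as a generic tropical hyperplane arrangement and taking the tropical segment between representative points, but that route is exactly what is unavailable for non-regular $T$, and the non-regular case is the entire content of the conjecture (the regular case is already Develin--Sturmfels plus Theorem \ref{Thm:GenVert}). Saying ``the task is to make the argument purely combinatorial'' names the problem rather than solving it. Your treatment of the Comparability Axiom is also too quick: the union of two spanning trees of $K_{n,d}$ is generally not a forest, so acyclicity of the comparability graph $CG_{A,B}$ (a directed condition on $[d]$, not on $K_{n,d}$) does not follow from the cells being forests. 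Second, in the converse direction, the covering step --- showing the simplices $\sigma_A$ attached to fine types exhaust $\Delta_{n-1}\times\Delta_{d-1}$ with no gaps --- is the hard global statement, and ``a volume or Euler-characteristic computation'' is a wish, not an argument; likewise the inductive deletion step requires proving that restricting types to $[n-1]$ yields a tropical oriented matroid and that the fibers over the cells of $T'$ triangulate consistently, neither of which is verified. In short, the roadmap is plausible and aligned with the literature, but the two steps you yourself identify as ``the real content'' and ``the essential difficulty'' are precisely where the theorem lives, and they are not proved here.
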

    
    Oh and Yoo's proof uses, and is phrased with respect to, the bijection between products of the form $\Delta_{n-1} \times \Delta_{d-1}$ and the complete bipartide graph $k_{n,d}$ and appears in \cite {HoYoo}.    (For more on translations between $\Delta_{n-1} \times \Delta_{d-1}$ and the complete bipartide graph $k_{n,d}$ see \cite{SantosBook}.)

The bijection between tropical oriented matroids of type $(n, d)$ and products of simplices $\Delta_{n-1} \times \Delta_{d-1}$ has ties to non-generic arrangements of tropical hyperplanes as well. 
  
    In particular, the fact that tropical oriented matroids are in bijection with triangulations of products of simplices means that non-generic tropical hyperplane arrangements do not correspond to triangulations of products of simplices.  Such non-generic arrangements exist instead as the limit of two such triangulations--- two triangulations separated by a ``flip."  The non-generic nature of the arrangement corresponds to the that the flip remains unspecified.       
    When all triangulations of $\Delta_{n-1} \times \Delta_{d-1}$ are regular, all type $(n, d)$ tropical oriented matroids correspond to vertices of $\Delta_{n-1} \times \Delta_{d-1}$ 's secondary polytope.

    (For nice background and explanation of triangulations, flips, and secondary polytopes see \cite{SantosBook}.)
    
    Combining this with the relationship between generic and non-generic tropical hyperplane arrangements, we obtain the following statement:

    \begin{theorem} \label{them:NonGenFlip}
    
      When all triangulations of $\Delta_{n-1} \times \Delta_{d-1}$ are regular, all non-generic arrangements of $n$ tropical hyperplane arrangements in $\TT\P^{d-1}$ correspond to faces of dim $> 0$ of the secondary polytope of $\Delta_{n-1} \times \Delta_{d-1}$.
      
      \end{theorem}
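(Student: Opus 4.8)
The plan is to route the statement through the Develin--Sturmfels correspondence between tropical hyperplane arrangements and regular subdivisions of $\Delta_{n-1}\times\Delta_{d-1}$, and then read it off the face poset of the secondary polytope. Recall from \cite{MR2149011} that an arrangement $M$ of $n$ tropical hyperplanes in $\TT\P^{d-1}$ with apices $\v_1,\dots,\v_n$, $\v_i=(v_{i1},\dots,v_{id})$, is encoded by the height matrix $V=(v_{ij})$: assigning the height $v_{ij}$ to the vertex $(i,j)$ of $\Delta_{n-1}\times\Delta_{d-1}$ produces a \emph{regular} subdivision $\Delta_V$, and the types occurring in $M$ are exactly the duals of the cells of $\Delta_V$, the cell dual to a type $(A_1,\dots,A_n)$ being $\conv\{(i,j):j\in A_i\}$ and hence having $\sum_i|A_i|$ vertices. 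Generic arrangements are precisely those with $V$ in general position, equivalently those for which $\Delta_V$ is a triangulation; restricted to these, this recovers Theorem~\ref{Thm:GenVert} and the forward direction of Theorem~\ref{them:Matroid/Simplex}. The point to emphasise is that $\Delta_V$ is regular for \emph{every} $M$, generic or not, because $V$ is literally a height function.

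The key step is then to show that a non-generic $M$ gives a $\Delta_V$ that is not a triangulation. Let $A$ be a non-generic apex of $M$ with type $(A_1,\dots,A_n)$. By Lemma~\ref{lem:nonGenVert} we have $\sum_i|A_i|\ge n+d$, whereas $\Delta_{n-1}\times\Delta_{d-1}$ has dimension $n+d-2$, so every simplex it contains --- in particular every cell of any of its triangulations --- has at most $n+d-1$ vertices. Thus the cell of $\Delta_V$ dual to the type of $A$ is not a simplex, so $\Delta_V$ is not a triangulation. This is the heart of the matter: it is exactly the count in Lemma~\ref{lem:nonGenVert}, transported across the duality, that obstructs simpliciality --- equivalently, it is the failure of the surrounding axiom in Corollary~\ref{cor:NonGenericHyps} seen on the subdivision side. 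I expect no real obstacle here beyond the bookkeeping of the dictionary --- correctly identifying the cell dual to the apex's type and matching dimensions --- which is standard.

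Finally I would bring in the secondary polytope $\Sigma:=\Sigma(\Delta_{n-1}\times\Delta_{d-1})$. Its nonempty faces are in inclusion-reversing bijection with the regular subdivisions of $\Delta_{n-1}\times\Delta_{d-1}$, with vertices corresponding to the regular triangulations; hence a face has dimension $0$ exactly when its associated subdivision is a triangulation (see \cite{SantosBook}). By the previous step $\Delta_V$ is regular but not a triangulation, so it corresponds to a face $F$ of $\Sigma$ with $\dim F\ge 1$, which is the face to which $M$ corresponds. The hypothesis that every triangulation of $\Delta_{n-1}\times\Delta_{d-1}$ is regular is what makes this dovetail with the vertex case: under it, the vertices of $\Sigma$ are exactly all triangulations, hence by Theorems~\ref{them:Matroid/Simplex} and~\ref{Thm:GenVert} exactly the tropical oriented matroids, so the generic arrangements sit at vertices and the non-generic ones at positive-dimensional faces; moreover the vertices of $F$ are then exactly the triangulations refining $\Delta_V$, so $M$ is the common degeneration of those triangulations, and when $F$ is an edge this degeneration is the single unspecified flip described above. (In fact the construction $M\mapsto F$ is a bijection between combinatorial types of non-generic arrangements and faces of $\Sigma$ of positive dimension, since every regular non-triangulation subdivision is realised by some height matrix.)
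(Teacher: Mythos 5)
Your proposal is correct, but it proves the theorem by a genuinely different route than the paper. The paper's proof is a deformation argument on the arrangement side: it starts from a generic arrangement $M$ at a vertex of the secondary polytope, moves an apex $A$ across a wall of another hyperplane's fan from sector $k$ to sector $l$, and identifies the intermediate non-generic position (where $\tilde{A}_i=\{k,l\}$) with the face of the secondary polytope containing the two vertices $M$ and $M'$; the non-generic arrangement is thus exhibited as an unresolved flip. You instead route everything through the Develin--Sturmfels height-matrix correspondence \cite{MR2149011}: the arrangement induces a regular subdivision $\Delta_V$, Lemma \ref{lem:nonGenVert} forces the cell dual to a non-generic apex to have at least $n+d$ vertices inside the $(n+d-2)$-dimensional polytope $\Delta_{n-1}\times\Delta_{d-1}$, so that cell is not a simplex, $\Delta_V$ is a regular non-triangulation, and hence corresponds to a face of the secondary polytope of dimension at least $1$. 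Your version buys several things: it is more rigorous, it handles \emph{every} non-generic arrangement uniformly (the paper's argument as written only exhibits the correspondence for arrangements reachable by degenerating a single apex across a single wall, and does not obviously cover arrangements with several simultaneous coincidences), it makes the role of Lemma \ref{lem:nonGenVert} explicit as the obstruction to simpliciality, and it shows that the hypothesis that all triangulations be regular is not actually needed for the stated conclusion (only for the converse picture in which every vertex of the secondary polytope is realized by a generic arrangement). What the paper's approach buys in exchange is the geometric picture emphasized in the surrounding discussion --- that a non-generic arrangement is the common limit of the neighboring generic ones and that a single coincidence corresponds to a single unspecified flip, i.e.\ typically an edge --- which your argument recovers only in the closing remark about the vertices of the face $F$ being the triangulations refining $\Delta_V$.
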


      \begin{proof}
      
      Regular triangulations of $\Delta_{n-1} \times \Delta_{d-1}$  correspond to vertices of the secondary polytope of $\Delta_{n-1} \times \Delta_{d-1}$.  
      When all triangulations of $\Delta_{n-1} \times \Delta_{d-1}$ are regular, this means that arrangements of $n$ tropical hyperplanes in $\TT\P^{d-1}$ also correspond to vertices of the secondary polytope.
      Edges between vertices in this polytope correspond to flips  between triangulations.  In particular, a subdivision that does not make a choice of flip is only a subdivision, not a triangulation, corresponding to an edge (or higher dimensional face of the polytope if it fails to make a choice on multiple flips).  
      A generic tropical hyperplane arrangement $M$  
       corresponds to a tropical oriented matroid, and hence a vertex of the secondary polytope of $\Delta_{n-1} \times \Delta_{d-1}$. 
       I abuse notation by also labeling this $M$.   
       Consider an apex $A$ of this arrangement  changing relative position with respect to some other hyperplane $H_i$.  Without loss of generality,  say it moves from sector $k$ to sector $l$ (taking you from $A$ with $A_i = \{k\}$ to $A'$ with $A'_i = \{l\}$).  This move results in a new tropical oriented matroid, corresponding to a different vertex of the secondary polytope of $\Delta_{n-1} \times \Delta_{d-1}$, call it $M'$.  
       To move from sector $k$ to sector $l$ one must pass through a face of the fan given by $H_i$, in particular the face of $H_i$ labeled $\{k,l\}$.
        Pausing at that point would yield $\tilde{A}$ such that $\tilde{A}_i = \{k,l\}$.  
        The arrangement containing this $\tilde{A}$ is a non-generic tropical hyperplane arrangement, and corresponds to the face of the secondary polytope containing $M$ and $M'$.  
      
      \end{proof}

       \cite{SantosBook} explains that the only cases of  $\Delta_{n-1} \times \Delta_{d-1}$ for which $n$ and $d$ are $> 1$ and all triangulations are regular are: $\Delta_{2} \times \Delta_{2}$, $\Delta_{3} \times \Delta_{2}$, $\Delta_{4} \times \Delta_{2}$, $\Delta_{2} \times \Delta_{3}$, and $\Delta_{2} \times \Delta_{4}$.  
      The reason this result does not hold for other $\Delta_{n-1} \times \Delta_{d-1}$ is that non-regular triangulations are not represented by the secondary polytope.  
      It remains true that a non-generic tropical hyperplane arrangement corresponds to a subdivision  ``between" triangulations, which correspond to its neighboring generic hyperplane arrangements.  
      Yes, this statement suggests that there is a space of all possible tropical hyperplane arrangements of a given size and that the non-generic ones represent the boundaries separating the generic ones.
        This is the more general case that is being modeled here by the secondary polytope.

      These preliminary results demonstrate not only an important distinction between the generic and non-generic  tropical hyperplane arrangements but also that the non-generic case merits further investigation.

\bibliographystyle{plain}
\bibliography{treeing.}
  
\end{document}